\newcommand{\mr}{\ensuremath{\mathbb R}}
\newcommand{\half}{\ensuremath{ \frac{1}{2}}}
\newcommand{\intR}{\int_{-\infty}^{\infty}}
\newcommand{\thalf}{\tfrac12}
\newcommand{\leg}[2]{\left(\frac{#1}{#2}\right)}
\newtheorem*{thm1.1'}{Theorem 1.1'}
\theoremstyle{plain}		
	\newtheorem{mytheo}{Theorem}
     \newtheorem{mylemma}[mytheo]{Lemma}
\theoremstyle{remark}
\numberwithin{equation}{section}
\begin{document}
\title{A short proof of Levinson's theorem}
\author{Matthew P. Young} 
\address{Department of Mathematics \\
 	  Texas A\&M University \\
 	  College Station \\
	  TX 77843-3368 \\
		U.S.A.}
\curraddr{School of Mathematics \\
Institute for Advanced Study \\
Einstein Drive \\ Princeton, NJ 08540 USA}
\thanks{This material is based upon work supported by the National Science Foundation under agreement Nos. DMS-0758235 and DMS-0635607.  Any opinions, findings and conclusions or recommendations expressed in this material are those of the authors and do not necessarily reflect the views of the National Science Foundation.}
\email{myoung@math.tamu.edu}

\maketitle
\section{Introduction}
In 1974, Levinson \cite{Levinson} proved that $1/3$ of the zeros of the Riemann zeta function $\zeta(s)$ lie on the critical line.  Apparently his work has a reputation for being difficult, and many textbook authors (\cite{T}, \cite{Ivic}, \cite{KV}, \cite{IK}) present Selberg's method \cite{Selberg} instead (which gives a very small positive percent of zeros).  Here we show how innovations in the subject can greatly simplify the proof of Levinson's theorem.

To set some terminology, let $N(T)$ denote the number of zeros $\rho = \beta + i\gamma$ with $0 < \gamma < T$, and let $N_0(T)$ denote the number of such critical zeros with $\beta = 1/2$.  Define $\kappa$ by
 $\kappa = \liminf_{T \rightarrow \infty} \frac{N_0(T)}{N(T)}$. 
Levinson's result is that $N_0(T) > \frac13 N(T)$ for $T$ sufficiently large.

The basic technology to prove that many zeros lie on the critical line is an asymptotic for a mollified 
second moment of the zeta function (and its derivative).  This is well-known, and clear presentations can be found in various sources (\cite{Levinson}, \cite{Conrey25}, etc.).  We briefly summarize the setup.  Let $L = \log{T}$, and suppose $Q(x)$ is a real polynomial satisfying $Q(0) = 1$. Set 
\begin{equation*}
 V(s) = Q\Big(-\frac{1}{L} \frac{d}{ds} \Big) \zeta(s).
\end{equation*}
Levinson's original approach naturally had $Q(x) = 1-x$, but Conrey \cite{ConreyJNT} showed how more general choices of $Q$ can be used to improve results.  For historical comparison we shall eventually choose $Q(x) = 1-x$.
Let $\sigma_0 = \half - R/L$ for $R$ a positive real number to be chosen later, $M= T^{\theta}$ for some $0 < \theta < \half$, and $P(x) = \sum_j a_j x^j$ be a real polynomial satisfying $P(0) = 0$, $P(1) = 1$. 
Suppose that $\psi$ is a mollifier of the form
\begin{equation*}
 \psi(s) = \sum_{h \leq M} \frac{\mu(h)  
}{h^{s+\frac12 - \sigma_0}} P\Big(\frac{\log{M/h}}{\log{M}}\Big),
\end{equation*}
 Again, for historical reasons we eventually take $P(x) = x$. 
The conclusion is that
\begin{equation}
\label{eq:kappa}
 \kappa \geq 1 - \frac{1}{R} \log \Big( \frac{1}{T} \int_{1}^{T} |V \psi(\sigma_0 + it)|^2 dt \Big) + o(1).
\end{equation}

The evaluation of the mollified second moment of zeta appearing in \eqref{eq:kappa} is considered to be the difficult part of Levinson's proof (taking up over $30$ pages in \cite{Levinson}).
Conrey and Ghosh \cite{CG} gave a simpler proof.
Here we show how to obtain this asymptotic in an easier way.

\begin{mytheo}
\label{thm:moment}
We have
\begin{equation}
\label{eq:moment}
 \frac{1}{T} \int_1^{T} |V\psi(\sigma_0 + it)|^2 dt = c(P,Q,R, \theta) + o(1),
\end{equation}
as $T \rightarrow \infty$, where
\begin{equation}
\label{eq:cdef}
 c(P, Q, R, \theta) = 1 + \frac{1}{\theta} \int_0^{1} \int_0^{1} e^{2Rv} \Big( \frac{d}{dx} \left. e^{R\theta x} 
P(x+u) Q(v + \theta x ) \right|_{x=0} \Big)^2
 du dv.
\end{equation}
\end{mytheo}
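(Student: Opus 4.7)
My plan is to introduce two small complex shift parameters $\alpha, \beta = O(1/L)$ and exploit the identity
\begin{equation*}
|V(\sigma_0+it)|^2 = Q\!\left(-L^{-1}\partial_\alpha\right) Q\!\left(-L^{-1}\partial_\beta\right) \Bigl[\zeta(\sigma_0+\alpha+it)\,\zeta(\sigma_0+\beta-it)\Bigr]_{\alpha=\beta=0},
\end{equation*}
which holds because $Q$ is a real polynomial and $\overline{\zeta(s)}=\zeta(\bar s)$. Consequently, the moment in \eqref{eq:moment} reduces to computing the shifted mollified moment
\begin{equation*}
J(\alpha,\beta) = \frac{1}{T}\int_1^T \zeta(\sigma_0+\alpha+it)\,\zeta(\sigma_0+\beta-it)\,|\psi(\sigma_0+it)|^2\,dt
\end{equation*}
asymptotically and uniformly for $\alpha,\beta$ in a small neighborhood of $0$, and then applying the two differential operators and setting $\alpha=\beta=0$ at the end. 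This separation is what decouples the combinatorial role of $Q$ from the analytic work of evaluating a pure shifted second moment.

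To handle $J(\alpha,\beta)$, I would expand $|\psi|^2$ as a double Dirichlet sum over $h,k\leq M$ and, for each pair, invoke the standard asymptotic for the twisted second moment
\begin{equation*}
\frac{1}{T}\int_1^T \left(\frac{k}{h}\right)^{it}\zeta(\sigma_0+\alpha+it)\,\zeta(\sigma_0+\beta-it)\,dt.
\end{equation*}
This produces a \emph{diagonal} main term proportional to $\zeta(1+\alpha+\beta)$ times a simple local factor in $h,k$, plus a \emph{swap} main term proportional to $(t/(2\pi))^{-\alpha-\beta}\zeta(1-\alpha-\beta)$ times an analogous factor arising from the functional equation. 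Since $M = T^{\theta}$ with $\theta<1/2$, the total size $hk\leq M^2 < T^{1-\epsilon}$ lies within the range in which such asymptotics hold with $o(1)$ error. Summing the two main terms against $\mu(h)\mu(k)P(\log(M/h)/\log M)P(\log(M/k)/\log M)$ is then accomplished by replacing each $P$-factor by its Mellin inversion integral and using $\sum_h \mu(h)/h^s = 1/\zeta(s)$ to collapse the $h$- and $k$-sums into short contour integrals whose poles near $s=1$ supply the leading behavior.

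The closed form \eqref{eq:cdef} should then emerge from a single residue calculation: the outer variable $v$ parametrizes the rescaled time integral from $(t/(2\pi))^{-\alpha-\beta}$ (after aligning $\alpha+\beta$ with the natural scale $2R/L$, producing $e^{2Rv}$), the variable $u$ parametrizes one Mellin-inversion contour for $P$, and the derivative at $x=0$ in \eqref{eq:cdef} arises because the two $1/\zeta$ factors contribute coincident simple poles whose joint residue is a derivative; the squaring reflects the symmetric roles of the $h$- and $k$-sums. The action of $Q(-L^{-1}\partial_\alpha)Q(-L^{-1}\partial_\beta)$ on the shifts then converts these into the factors $Q(v+\theta x)$ and $e^{R\theta x}$ after the change of variable aligning $\alpha$ (and $\beta$) with $R\theta x/L$. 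I expect the principal obstacle to be the simultaneous bookkeeping of all these parameters --- the two shifts, the two differential $Q$-operators, the Mellin and residue variables, and the swap term --- and packaging them into the single compact integral \eqref{eq:cdef}; the short proof presumably succeeds by keeping everything inside one multi-dimensional integral representation until a single clean residue computation delivers \eqref{eq:cdef} in one step.
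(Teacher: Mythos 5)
Your proposal follows essentially the same route as the paper: introduce complex shifts $\alpha,\beta$, pull $Q$ out as a differential operator in those shifts, reduce to a twisted mollified second moment with a diagonal plus a swap main term, Mellin-invert the $P$-weights to convert the M\"obius sums into short contour integrals involving $1/\zeta$, and finish with a residue computation. Two of your mechanistic descriptions are off, though they do not derail the plan: the $\frac{d}{dx}$ in \eqref{eq:cdef} does not come from ``coincident simple poles'' of the two $1/\zeta$ factors --- $1/\zeta(1+\alpha+u)$ has a simple \emph{zero} near $u=-\alpha$, not a pole, and the paper converts the resulting linear factor $(\alpha+u)$ into a derivative via $(\alpha+u)a^u = \frac{d}{dx}\big[e^{(\alpha+u)x}a^{u}\big]_{x=0}$ before taking the residue at $u=0$; and the leading constant $1$ in \eqref{eq:cdef} is not visible in your outline, but falls out of combining the diagonal and swap pieces through the identity $c_1(\alpha,\beta)+c_1(-\beta,-\alpha)=\int_0^1 2P'(u)P(u)\,du=1$, together with a maximum-modulus extension across the apparent singularity at $\alpha+\beta=0$ that justifies evaluating at $\alpha=\beta$.
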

\noindent With $P(x) = x$, $Q(x) = 1-x$, $R=1.3$, $\theta = .5$, and using any standard computer package,
\begin{equation*}
 c(P, Q, R, \theta) = 2.35\dots, \quad \text{and }  \kappa \geq 0.34\dots.
\end{equation*}



\section{A smoothing argument}
\label{section:smoothing}
To simplify forthcoming arguments, it is preferable to smooth the integral in \eqref{eq:moment}.  Suppose that $w(t)$ is a smooth function satisfying the following properties:
\begin{align}
\label{eq:w1}
&0 \leq w(t) \leq 1 \text{ for all } t \in \mr, \\
&w \text{ has compact support in } [T/4, 2T], \\
&w^{(j)}(t) \ll_j \Delta^{-j}, \text{ for each } j=0,1,2, \dots, \quad \text{where } \Delta = T/L.
\label{eq:w3}
\end{align}
\begin{mytheo}
\label{thm:smoothed}
 For any $w$ satisfying \eqref{eq:w1}-\eqref{eq:w3}, and $\sigma = 1/2 -R/L$,
\begin{equation}
\label{eq:smoothedintegral}
 \intR w(t) |V\psi(\sigma +it)|^2 dt =  c(P, Q, R, \theta) \widehat{w}(0) + O(T/L),
\end{equation}
uniformly for $R \ll 1$, where $c(P, Q, R, \theta)$ is given by \eqref{eq:cdef}.
\end{mytheo}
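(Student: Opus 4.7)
My plan is to reduce the mollified second moment of $V\psi$ to a shifted, twisted second moment of $\zeta$, evaluate the latter by an approximate-functional-equation analysis, and then recover the claimed formula by applying differential operators in the shifts and summing against the mollifier coefficients. First, introducing auxiliary complex shifts $\alpha,\beta$ of size $O(1/L)$ and using that $Q$ is a real polynomial, I would write
\begin{equation*}
V(\sigma+it)\,\overline{V(\sigma+it)} = \left.Q\Big(-\tfrac{1}{L}\partial_\alpha\Big)\,Q\Big(-\tfrac{1}{L}\partial_\beta\Big)\,\zeta(\sigma+\alpha+it)\,\zeta(\sigma+\beta-it)\right|_{\alpha=\beta=0}.
\end{equation*}
Expanding
\begin{equation*}
|\psi(\sigma+it)|^2 = \sum_{h,k\leq M} \frac{\mu(h)\mu(k)}{(hk)^{1/2}}\, P\Big(\tfrac{\log(M/h)}{\log M}\Big) P\Big(\tfrac{\log(M/k)}{\log M}\Big)\Big(\tfrac{k}{h}\Big)^{it},
\end{equation*}
and setting $\alpha' = \alpha - R/L$, $\beta' = \beta - R/L$, the problem reduces to an asymptotic, uniform in the shifts, for
\begin{equation*}
I_{h,k}(\alpha',\beta') := \intR w(t)\,\zeta(\tfrac12+\alpha'+it)\,\zeta(\tfrac12+\beta'-it)\,(k/h)^{it}\,dt.
\end{equation*}

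To evaluate $I_{h,k}$, I would apply the approximate functional equation to the product $\zeta\zeta$, balanced at $\sqrt{t/(2\pi)}$. The diagonal $mh = nk$ in the ``principal'' piece contributes $\widehat{w}(0)$ times a combination of (shifted) local Euler factors, while the ``swap'' piece---carrying the factor $\chi(\tfrac12+\alpha'+it)\chi(\tfrac12+\beta'-it)$---yields, after a stationary-phase evaluation, an analogous expression with shifts $(\alpha',\beta')\mapsto(-\beta',-\alpha')$ and an extra factor $(t/2\pi)^{-\alpha'-\beta'}$. The off-diagonal contributions are admissible because $h,k\leq M = T^\theta$ with $\theta<1/2$ keeps the relevant cross products below the conductor $T$, and the smoothness of $w$ allows repeated integration by parts in $t$ to produce arbitrary power savings in $L$. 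Summing over $h,k$ by writing each $P(\log(M/h)/\log M)$ as a Cauchy contour integral in an auxiliary variable then converts the $h$- and $k$-sums into ratios of zeta functions, yielding a clean meromorphic expression in $\alpha',\beta'$ and the $P$-variables.

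Finally, applying $Q(-\partial_\alpha/L)Q(-\partial_\beta/L)$ at $\alpha=\beta=0$ amounts, via Cauchy's formula, to a double contour integral around the origin in $\alpha,\beta$ against a $Q$-kernel; the rescalings $\alpha=-z_1/L$, $\beta=-z_2/L$ and a change of variable to a mean coordinate $v$ and a difference coordinate $x$ produce the double integral over $(u,v)\in[0,1]^2$ appearing in \eqref{eq:cdef}, the $x$-derivative arising from the combination of the principal and swap contributions. The main obstacle is uniformity: the $Q$-operators magnify any error in $I_{h,k}$ by a factor of $L^{2\deg Q}$, so the shifted twisted second moment must be evaluated with a saving of $L^{1+2\deg Q}$ beyond its main term to leave an overall error of $O(T/L)$ in \eqref{eq:smoothedintegral}. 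This precision is precisely what the smoothing \eqref{eq:w1}--\eqref{eq:w3} is designed to buy, through arbitrary-power savings from integration by parts in $t$ in the off-diagonal analysis.
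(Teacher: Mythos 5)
Your overall architecture matches the paper's: introduce shifts $\alpha,\beta \ll 1/L$ and realize $V\overline{V}$ via the operator $Q(-\partial_\alpha/L)Q(-\partial_\beta/L)$ acting on $\zeta\zeta$; expand the mollifier to reduce to a twisted, shifted second moment; apply the approximate functional equation, keep the diagonal from the principal and swap terms, and kill the off-diagonal by integration by parts using the smoothing \eqref{eq:w1}--\eqref{eq:w3}; treat the $P$-factors by a Mellin contour integral; and finally recover \eqref{eq:cdef} by applying the $Q$-operators. This is precisely the route the paper takes via Lemmas~\ref{lemma:Ialphabeta}, \ref{lemma:twisted}, and \ref{lemma:I1approx}.

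There is, however, a genuine gap in your error analysis that would derail the plan as written. You assert that ``the $Q$-operators magnify any error in $I_{h,k}$ by a factor of $L^{2\deg Q}$,'' and therefore that you must evaluate the twisted second moment with a saving of $L^{1+2\deg Q}$. That is incorrect, and the claimed precision is in fact \emph{not} attainable in the diagonal analysis: the integration by parts gives arbitrary power savings only on the off-diagonal, while the diagonal main term (residue computation, Euler--Maclaurin replacement of the $n$-sum by an integral, Taylor expansion of $\zeta(1+\alpha+\beta)$ and the arithmetical factor) yields an error of $O(T/L)$ and no better. The crucial observation --- which is the content of the deduction of Theorem~\ref{thm:smoothed} from Lemma~\ref{lemma:Ialphabeta} in the paper --- is that the $1/L$ normalization in $Q(-\partial_\alpha/L)$ exactly compensates the growth in Cauchy's integral formula. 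Explicitly, if $E(\alpha,\beta)$ is the error term and it is bounded by $O(T/L)$ \emph{uniformly} for $\alpha,\beta$ on circles of radius $\asymp 1/L$ around the evaluation point, then
\begin{equation*}
\frac{1}{L^{m}}\,\partial_\alpha^m E(\alpha_0,\beta)
= \frac{1}{L^{m}}\cdot\frac{m!}{2\pi i}\oint_{|\alpha-\alpha_0|\asymp L^{-1}}\frac{E(\alpha,\beta)}{(\alpha-\alpha_0)^{m+1}}\,d\alpha
\ll m!\,\sup |E| \ll \frac{T}{L},
\end{equation*}
so $Q(-\partial_\alpha/L)Q(-\partial_\beta/L)E$ is still $O(T/L)$, with no loss of powers of $L$. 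What you actually need is only a \emph{uniform} $O(T/L)$ error for $I(\alpha,\beta)$ over $\alpha,\beta\ll L^{-1}$, not a stronger power saving. Without this observation your plan would lead you to try to extract a precision from the diagonal that the method does not provide.
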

We briefly explain how to deduce Theorem \ref{thm:moment} from Theorem \ref{thm:smoothed}.  
By choosing $w$ to satisfy \eqref{eq:w1}-\eqref{eq:w3} and in addition to be an upper bound for the characteristic function of the interval $[T/2, T]$, and with support in $[T/2 - \Delta, T + \Delta]$, we get
\begin{equation*}
 \int_{T/2}^{T} |V \psi(\sigma_0 + it)|^2 dt \leq c(P, Q, R, \theta) \widehat{w}(0) + O(T/L).
\end{equation*}
Note $\widehat{w}(0) = T/2 + O(T/L)$.
We similarly get a lower bound.  Summing over dyadic segments gives the full integral.

\section{The mean-value results}
Rather than working directly with $V(s)$, instead consider the following general integral:
\begin{equation}
\label{eq:Idef}
 I_{}(\alpha,\beta) = \intR w(t) \zeta(\tfrac12 + \alpha + it) \zeta(\tfrac12+ \beta -it) |\psi (\sigma_0 + it)|^2 dt,
\end{equation}
where $\alpha, \beta \ll L^{-1}$ (with any fixed implied constant).
The main result is
\begin{mylemma}
\label{lemma:Ialphabeta}
We have
\begin{equation}
I(\alpha, \beta) = c(\alpha,\beta) \widehat{w}(0) + O(T/L),
\end{equation}
uniformly for $\alpha, \beta \ll L^{-1}$, where
\begin{equation}
\label{eq:c}
 c(\alpha, \beta)=1 + \frac{1}{\theta} \frac{d^2}{dx dy} 
 M^{-\beta x - \alpha y} \int_0^{1} \int_0^{1} T^{-v(\alpha+\beta)} P(x+u) P(y+u) du \Big|_{x=y=0}.
\end{equation}
\end{mylemma}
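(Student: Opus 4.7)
The plan is to open the mollifier square, reducing $I(\alpha,\beta)$ to a double sum over $h,k \le M$ of shifted twisted second moments of $\zeta$, to apply a standard asymptotic formula for such moments, and finally to evaluate the resulting arithmetic sum by contour integration. On the contour $\sigma_0 + it$ we have $s + \tfrac12 - \sigma_0 = \tfrac12 + it$, so writing $P_1(h) := P(\log(M/h)/\log M)$,
\begin{equation*}
I(\alpha,\beta) = \sum_{h,k \leq M} \frac{\mu(h)\mu(k)\, P_1(h) P_1(k)}{\sqrt{hk}}\, J_{h,k}(\alpha,\beta),
\end{equation*}
where
\begin{equation*}
J_{h,k}(\alpha,\beta) = \intR w(t)\, \zeta(\tfrac12+\alpha+it)\, \zeta(\tfrac12+\beta-it)\, (k/h)^{it}\, dt.
\end{equation*}

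The heart of the proof is an asymptotic for $J_{h,k}(\alpha,\beta)$. I would insert the approximate functional equation for each zeta factor, with dual lengths $X,Y$ satisfying $XY = t/(2\pi)$. The product has four terms; two of them -- a diagonal-diagonal and a dual-dual -- produce main terms after matching $mh = nk$ in the $m,n$ summations. Writing $d = \gcd(h,k)$, $h = dh'$, $k = dk'$, the diagonal is parameterized by $n = h'\ell$, $m = k'\ell$, which kills the $t$-dependence; integrating against $w(t)$ then collapses to $\widehat{w}(0)$ times an arithmetic factor in $h,k,\alpha,\beta$. The dual-dual contribution produces a second diagonal with shifts $(\alpha,\beta) \mapsto (-\beta,-\alpha)$ together with the factor $\chi(\tfrac12+\alpha+it)\chi(\tfrac12+\beta-it) \sim (t/(2\pi))^{-\alpha-\beta}$; this is the origin of the $T^{-v(\alpha+\beta)}$ appearing in \eqref{eq:c}. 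The remaining cross terms, together with the off-diagonal contributions of the matched terms, are negligible because $hk \le M^2 = T^{2\theta}$ with $2\theta < 1$, so they fit inside the $O(T/L)$ remainder uniformly for $\alpha,\beta \ll 1/L$.

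Once the main term for $J_{h,k}$ is in hand, the remaining task is to sum over $h,k$. Representing $P_1$ via its Mellin transform, $P_1(h) = \frac{1}{2\pi i}\int \widetilde{P}(s) (M/h)^s\, ds$, converts the double sum into a contour integral in two Mellin variables dual to the two $P_1$'s (which accounts for the outer $\frac{d^2}{dx\, dy}$ and the $M^{-\beta x - \alpha y}$ factor in \eqref{eq:c}). After expressing the resulting Dirichlet series in $h,k$ in terms of $\zeta(1+\alpha+\beta+\cdots)$ and shifting contours past the pole at the origin, the residues produce the $u$-integral, while the extra parameter $v \in [0,1]$ parametrizing the dual-dual $\chi\chi$-contribution produces the $T^{-v(\alpha+\beta)}$ $v$-integral. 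The constant $1$ in $c(\alpha,\beta)$ comes from the $h=k=1$ term (or equivalently the trivial residue of the $\zeta(1+\cdots)$ factor). The main obstacle throughout is step two: the twisted second moment must be computed to accuracy $O(T/L)$ while preserving the exact dependence of the main term on the shifts $\alpha,\beta$, since these will later be differentiated to extract $V\psi$. Step three is essentially bookkeeping in Mellin inversion once the analytic input of step two is secured.
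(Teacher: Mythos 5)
Your outline shares the paper's high-level architecture -- open the mollifier square, insert an approximate functional equation, keep the diagonal and dual-dual pieces, and finish with Mellin inversion over $h,k$ -- but the crucial analytic step is handled differently, and there the proposal has a genuine gap. The paper's Lemma \ref{lemma:AFE} is a smooth approximate functional equation for the \emph{product} $\zeta(\tfrac12+\alpha+it)\zeta(\tfrac12+\beta-it)$, which has exactly two terms plus a rapidly decaying error, so no cross terms ever arise. In Lemma \ref{lemma:twisted} the off-diagonal $hm\neq kn$ is then disposed of by plain repeated integration by parts, using only the smoothness of $w$ and $V_{\alpha,\beta}$ together with the elementary bound $|\log(hm/kn)|\gg (hkmn)^{-1/2}$ and $hk\le T^{2\theta}$ with $\theta<\tfrac12$; no oscillation from $\chi$-factors needs to be exploited.

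You instead apply a separate AFE to each zeta factor, producing four products, two of them cross terms carrying a single $\chi$-factor. Asserting these fit inside $O(T/L)$ ``because $hk\le T^{2\theta}$'' is not an argument: after inserting Stirling for the phase of $\chi$, the $t$-integral of a cross term has a stationary point near $t\approx 2\pi hmn/k$, which lies inside the support of $w$ for a substantial set of admissible $(h,k,m,n)$ (for instance already $h=k=1$ with $m,n\asymp\sqrt{T}$, $mn\asymp T$), and a naive stationary-phase estimate gives a contribution of size on the order of $T$, the same as the main term. Establishing that the cross terms are negligible therefore requires exhibiting further cancellation (a Voronoi-type analysis or equivalent), which is precisely the kind of work the paper's product AFE sidesteps -- and part of what makes its proof ``short.'' One smaller inaccuracy: the constant $1$ in \eqref{eq:c} is not the $h=k=1$ term. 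In the paper it arises as the cancellation $c_1(\alpha,\beta)+c_1(-\beta,-\alpha)=\int_0^1 2P'(u)P(u)\,du = P(1)^2-P(0)^2 = 1$, which combines the two diagonals and uses the normalizations $P(0)=0$, $P(1)=1$.
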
 
\begin{proof}[Proof that Lemma \ref{lemma:Ialphabeta} implies Theorem \ref{thm:smoothed}]
Define $I_{\text{smooth}}$ to be the left hand side of \eqref{eq:smoothedintegral}.  Then
\begin{equation}
\label{eq:Ismoothderiv}
I_{\text{smooth}} =   Q\Big(-\frac{1}{L} \frac{d}{d\alpha}\Big) Q\Big(-\frac{1}{L} \frac{d}{d\beta}\Big) I(\alpha, \beta) \Big|_{\alpha=\beta=-R/L}.
\end{equation}
We first argue that we can obtain $c(P, Q, R, \theta)$ by applying the above differential operator to $c(\alpha, \beta)$.  
Since $I(\alpha, \beta)$ and $c(\alpha, \beta)$ are holomorphic with respect to $\alpha$, $\beta$ small, the derivatives appearing in \eqref{eq:Ismoothderiv} can be obtained as integrals of radii $\asymp L^{-1}$ around the points $-R/L$, from Cauchy's integral formula.  Since the error terms hold uniformly on these contours, the same error terms that hold for $I(\alpha, \beta)$ also hold for 
$I_{\text{smooth}}$.

Next we check that applying the differential operator to $c(\alpha, \beta)$ does indeed give \eqref{eq:cdef}.
Notice the simple formula
\begin{equation}
\label{eq:Qop}
 Q\left(\frac{-1}{\log{T}} \frac{d}{d\alpha}\right) X^{-\alpha} = Q\leg{\log{X}}{\log{T}} X^{-\alpha}.
\end{equation}
Using \eqref{eq:Qop} we have
\begin{multline*}
Q(-\frac{1}{L} \frac{d}{d\alpha}) Q(-\frac{1}{L} \frac{d}{d\beta}) c(\alpha, \beta)
\\
= 1 + \frac{1}{\theta} \frac{d^2}{dx dy}  M^{-\beta x -\alpha y} \int_0^{1} \int_0^{1} T^{-v(\alpha + \beta)} P(x+u) P(y+u) Q(v + x\theta) Q(v + y \theta) du dv \Big|_{x=y=0},
\end{multline*}
which after evaluating at $\alpha = \beta = -R/L$ and simplifying becomes
\begin{equation*}
1 + \frac{1}{\theta} \frac{d^2}{dx dy}  e^{R\theta(x+y)} 
\int_0^{1} \int_0^{1} e^{2Rv} P(x+u) P(y+u) Q(v + \theta x) Q(v + \theta y)
 du dv \Big|_{x=y=0} .
\end{equation*}
This simplifies to give the right hand side of \eqref{eq:cdef}, as desired.
\end{proof}

\section{Two lemmas}
A variation on the standard approximate functional equation (\cite{IK} Theorem 5.3) gives
\begin{mylemma}
\label{lemma:AFE}
Let $G(s) = e^{s^2} p(s)$ where $p(s) = \frac{(\alpha+\beta)^2-(2s)^2}{(\alpha + \beta)^2}$,
and define
\begin{equation}
\label{eq:V}
V_{\alpha, \beta}(x, t) = \frac{1}{2 \pi i} \int_{(1)} \frac{G(s)}{s} g_{\alpha, \beta}(s,t) x^{-s} ds, \quad
g_{\alpha, \beta}(s,t) = \pi^{-s}
\frac{\Gamma\left(\frac{\half + \alpha + s +it}{2} \right)}{\Gamma\left(\frac{\half + \alpha +it }{2} \right)} 
\frac{\Gamma\left(\frac{\half + \beta + s -it}{2} \right)}{\Gamma\left(\frac{\half + \beta -it }{2} \right)} .
\end{equation}
Furthermore, set
\begin{equation*}
X_{\alpha,\beta,t} = \pi^{\alpha + \beta} 
\frac{\Gamma(\frac{\half -\alpha - it}{2})}{\Gamma(\frac{\half + \alpha + it}{2})}
\frac{\Gamma(\frac{\half -\beta + it}{2})}{\Gamma(\frac{\half + \beta - it}{2})}
.
\end{equation*}
Then if $\alpha, \beta$ have real part less than $1/2$, and for any $A \geq 0$, we have
\begin{multline*}
\zeta({\textstyle \half + \alpha + it}) 
\zeta({\textstyle \half + \beta - it}) 
=
\sum_{m,n} \frac{1}{m^{\half + \alpha} n^{\half + \beta}} \left(\frac{m}{n}\right)^{-it} V_{\alpha, \beta} ( mn, t)
\\
+ X_{\alpha,\beta,t}
   \sum_{m,n} \frac{1}{m^{\half -\beta} n^{\half - \alpha}} \left(\frac{m}{n}\right)^{-it} V_{-\beta, -\alpha} (mn, t) + O_A((1 + |t|)^{-A}).
\end{multline*}
\end{mylemma}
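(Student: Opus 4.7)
The plan is to derive the identity by a standard contour shift combined with the functional equation of $\zeta$. Observe that $G(s)$ is entire and even with $G(0)=1$, and has Gaussian decay on vertical strips. I would begin with the auxiliary integral
\begin{equation*}
J(t) := \frac{1}{2\pi i}\int_{(2)} \zeta\bigl(\tfrac12+\alpha+it+s\bigr) \zeta\bigl(\tfrac12+\beta-it+s\bigr)\, g_{\alpha,\beta}(s,t)\, \frac{G(s)}{s}\, ds.
\end{equation*}
On $\mathrm{Re}(s) = 2$ each zeta equals its absolutely convergent Dirichlet series, so interchanging sum and integral yields
\begin{equation*}
J(t) = \sum_{m,n} \frac{(m/n)^{-it}}{m^{1/2+\alpha} n^{1/2+\beta}}\, V_{\alpha,\beta}(mn, t),
\end{equation*}
which is the first sum in the statement (the integrand of $V$ is holomorphic in $0 < \mathrm{Re}(s) \leq 2$, so the definition at $(1)$ agrees with the one at $(2)$).

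Next, writing $F(s)$ for the integrand of $J(t)$, I would shift the contour from $(2)$ to $(-2)$, crossing simple poles at $s=0$ (from $1/s$) and at $s = \tfrac12-\alpha-it$, $s = \tfrac12-\beta+it$ (from the two zetas). Since $G(0) = g_{\alpha,\beta}(0,t) = 1$, the residue at $s=0$ is precisely $\zeta(\tfrac12+\alpha+it)\zeta(\tfrac12+\beta-it)$. At the other two poles the factor $e^{s^2}$ is evaluated at a point with $|\mathrm{Im}(s)|\asymp |t|$, producing $O(e^{-t^2/2})$, which is absorbed into $O_A((1+|t|)^{-A})$; the same decay controls the horizontal pieces of the rectangle. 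Consequently
\begin{equation*}
\zeta(\tfrac12+\alpha+it)\zeta(\tfrac12+\beta-it) = J(t) - \frac{1}{2\pi i}\int_{(-2)} F(s)\, ds + O_A((1+|t|)^{-A}).
\end{equation*}

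To identify the shifted integral with the dual sum, I would substitute $s \mapsto -s$ (the contour returns to $(2)$ and $G$ is unchanged) and apply $\zeta(w) = \chi(w)\zeta(1-w)$ to each zeta factor. The entire gamma bookkeeping collapses to the single identity
\begin{equation*}
\chi\bigl(\tfrac12+\alpha+it-s\bigr)\, \chi\bigl(\tfrac12+\beta-it-s\bigr)\, g_{\alpha,\beta}(-s,t) = X_{\alpha,\beta,t}\, g_{-\beta,-\alpha}(s,t),
\end{equation*}
which follows by direct expansion: both sides reduce to $\pi^{\alpha+\beta-s}\Gamma\bigl((\tfrac12-\alpha-it+s)/2\bigr)\Gamma\bigl((\tfrac12-\beta+it+s)/2\bigr)$ divided by $\Gamma((\tfrac12+\alpha+it)/2)\Gamma((\tfrac12+\beta-it)/2)$. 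With this in hand, expanding $\zeta(\tfrac12-\alpha-it+s)\zeta(\tfrac12-\beta+it+s)$ as a Dirichlet series on $\mathrm{Re}(s) = 2$ and relabeling $m \leftrightarrow n$ so that the twist reads $(m/n)^{-it}$ produces exactly $X_{\alpha,\beta,t}$ times the second sum in the statement.

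The main obstacle is verifying the gamma-factor identity and keeping signs straight under $s \mapsto -s$ (noting that $1/s$ is odd while $G$ is even); neither is subtle, but both require routine care. The factor $p(s)$ plays no essential role here beyond being even with $p(0)=1$; its specific zeros will matter in the subsequent moment computation.
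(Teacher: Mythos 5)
Your argument is correct, and it is the standard contour-shift-plus-functional-equation proof of the approximate functional equation (IK, Theorem 5.3) adapted to the two shifts $\alpha,\beta$ and the twist by $(m/n)^{-it}$; the paper itself offers no proof and simply cites this result, so your write-up supplies exactly the proof the paper is outsourcing. One tiny point you pass over: shifting the contour to $\mathrm{Re}(s)=-2$ can also cross poles of the numerator Gamma factors in $g_{\alpha,\beta}(s,t)$ (the nearest is at $s=-\tfrac12-\alpha-it$), but these sit at height $|\mathrm{Im}(s)|\asymp|t|$ just like the zeta poles, so the same $e^{s^2}$ decay makes them $O_A((1+|t|)^{-A})$; it is worth a sentence. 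Your gamma-factor identity and the $m\leftrightarrow n$ relabeling in the dual sum check out, and you are right that the specific zeros of $p(s)$ are irrelevant here and only matter later (to cancel the pole of $\zeta(1+\alpha+\beta+2s)$ at $s=-(\alpha+\beta)/2$).
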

\noindent {\bf Remark}.  Stirling's approximation gives for $t$ large and $s$ in any fixed vertical strip
\begin{equation}
\label{eq:Stirling}
 X_{\alpha,\beta,t} = \leg{t}{2 \pi }^{-\alpha-\beta}(1 + O(t^{-1})), \qquad g_{\alpha,\beta}(s,t) = \leg{t}{2\pi}^s(1 +O(t^{-1} (1+|s|^2))).
\end{equation}
Furthermore, for any $A \geq 0$ and $j=0,1,2, \dots$, we have uniformly in $x$,
\begin{equation}
\label{eq:Vbound}
t^j \frac{\partial^j}{\partial t^j} V_{\alpha, \beta}(x,t) \ll_{A,j} (1+ |t/x|)^{-A}.
\end{equation}

\begin{mylemma}
\label{lemma:twisted}
 Suppose $w$ satisfies \eqref{eq:w1}-\eqref{eq:w3}, and that $h,k$ are positive integers with $hk \leq T^{2\theta}$ with $\theta < 1/2$, and $\alpha, \beta \ll L^{-1}$.  Then
\begin{multline}
\label{eq:twisted}
 \intR w(t) \big(\tfrac{h}{k}\big)^{-it}  \zeta(\thalf + \alpha + it) \zeta(\thalf + \beta - it) dt 
= 
\sum_{hm = k n} \frac{1}{m^{\half + \alpha} n^{\half + \beta}} \intR V_{\alpha, \beta}(mn, t) w(t) dt
\\
+ \sum_{h m = k n} \frac{1}{m^{\half - \beta} n^{\half - \alpha}} \intR V_{-\beta, -\alpha}(mn, t) X_{\alpha, \beta, t} w(t) dt + O_{A, \theta} (T^{-A}).
\end{multline}
\end{mylemma}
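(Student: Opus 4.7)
The plan is to substitute Lemma~\ref{lemma:AFE} into the $t$-integral and then interchange summation with integration. First I would apply the approximate functional equation pointwise in $t$ to the factor $\zeta(\half+\alpha+it)\zeta(\half+\beta-it)$, so that the factor $(h/k)^{-it}$ combines with the AFE's $(m/n)^{-it}$ to yield $(hm/kn)^{-it}$. The rapid decay of $V_{\alpha,\beta}(mn,t)$ in $mn/t$ from \eqref{eq:Vbound} renders the resulting double sums absolutely convergent, legitimizing the interchange. The AFE's error $O_A((1+|t|)^{-A})$, once integrated against the compactly supported $w$, contributes $O(T^{1-A})$, which is absorbed into the claimed $O_{A,\theta}(T^{-A})$ after relabelling $A$.

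After the interchange, the diagonal pairs $hm=kn$ immediately produce the two main sums on the right-hand side of \eqref{eq:twisted}, since there $(hm/kn)^{-it}=1$. What remains is to show that the off-diagonal contribution
\[
\sum_{hm\neq kn}\frac{1}{m^{\half+\alpha}n^{\half+\beta}}\int_{\mr}w(t)\Big(\frac{hm}{kn}\Big)^{-it}V_{\alpha,\beta}(mn,t)\,dt\ +\ (\text{dual sum})
\]
is $O_{A,\theta}(T^{-A})$.

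To estimate each off-diagonal term I would integrate by parts $J$ times in $t$, picking up the factor $|\log(hm/kn)|^{-1}$ at each step and differentiating the amplitude $w(t)V_{\alpha,\beta}(mn,t)$. By \eqref{eq:w3} and \eqref{eq:Vbound}, the derivatives of this amplitude are bounded by $\Delta^{-j}=(L/T)^j$ times the decay factor in $mn/t$: the $w$-derivatives dominate since the $t$-derivatives of $V$ are even smaller in the effective support $mn\ll T$. Thus each off-diagonal term acquires an overall factor $\left(L/(T|\log(hm/kn)|)\right)^J$. The constraint $hk\leq T^{2\theta}$, together with the $V$-cutoff $mn\ll T^{1+\epsilon}$, bounds $\max(hm,kn)$ by a power of $T$ (roughly $T^{1+2\theta+\epsilon}$), which for generic off-diagonal pairs gives a lower bound on $|\log(hm/kn)|$ adequate to produce arbitrary polynomial savings. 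The dual sum, which carries the extra factor $X_{\alpha,\beta,t}$, is handled identically, using Stirling's asymptotic \eqref{eq:Stirling} to confirm that $X_{\alpha,\beta,t}$ is smooth on scale $t\asymp T$ and does not obstruct the integration-by-parts analysis.

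The main obstacle will be the near-diagonal regime, where $|hm-kn|$ is bounded (say $O(1)$) while $\max(hm,kn)\sim T$, so that $|\log(hm/kn)|\asymp T^{-1}$ is too small for integration by parts alone to yield any polynomial saving in $T$. Here one must argue differently, for example by directly counting such pairs: the Diophantine condition $|hm-kn|\leq C$ combined with $hk\leq T^{2\theta}$ and $mn\ll T^{1+\epsilon}$ admits relatively few $(m,n)$, and the assumption $\theta<1/2$ is precisely what allows this count, weighted by $(mn)^{-1/2}$ together with a trivial bound on the integral, to combine with the IBP savings from the remaining off-diagonal range and yield the claimed $O_{A,\theta}(T^{-A})$ error.
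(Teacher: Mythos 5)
Your outline matches the paper's strategy — apply Lemma~\ref{lemma:AFE}, interchange, read off the diagonal, and kill the off-diagonal by repeated integration by parts using the smoothness of $w$ and the decay of $V$ — but your final paragraph introduces a case split and a counting argument that the paper does not need, and as written that step is a genuine gap. The ``near-diagonal regime'' you single out, namely $|hm-kn|=O(1)$ with $\max(hm,kn)\asymp T$, in fact does not occur inside the effective support of $V$: if $hm$ and $kn$ are both $\asymp T$ then $mn = hmkn/(hk) \gg T^{2}/T^{2\theta} = T^{2(1-\theta)}$, and since $\theta<1/2$ this is $\gg T^{1+\delta}$ for some fixed $\delta>0$, so the factor $(1+mn/T)^{-A}$ from \eqref{eq:Vbound} already gives arbitrary power savings there with no counting required. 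The counting sketch you gesture at is not carried out, and would in any case be the wrong mechanism.

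The paper's proof makes this uniform by a short elementary observation: for $hm\geq kn+1$,
\begin{equation*}
\Big|\log\frac{hm}{kn}\Big| \geq \log\Big(1+\frac{1}{kn}\Big) \geq \frac{1}{2kn} \geq \frac{1}{2\sqrt{hkmn}},
\end{equation*}
and symmetrically in the other case. Feeding this single lower bound into the $j$-fold integration by parts, each off-diagonal term is $\ll_{j,A}(1+mn/T)^{-A}(2\sqrt{hkmn}/\Delta)^{j}$. Since $\Delta=T/L$, $hk\leq T^{2\theta}$, and the decay factor confines $mn\ll T^{1+\varepsilon}$, one has $\sqrt{hkmn}/\Delta\ll L\,T^{\theta-1/2+\varepsilon/2}\ll T^{-\delta'}$ for small $\varepsilon$, so a single choice of large $j$ (and $A$) gives $O(T^{-A})$ uniformly over \emph{all} off-diagonal $(m,n)$ at once. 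You should replace the vague near-diagonal discussion with this estimate; once you do, your argument becomes the paper's.
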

\begin{proof}
We apply Lemma \ref{lemma:AFE} to the left hand side.  It suffices by symmetry to consider the first part of the approximate functional equation, giving
\begin{equation*}
\sum_{m,n} \frac{1}{m^{\half + \alpha} n^{\half + \beta}} \intR w(t) \left(\frac{hm}{kn}\right)^{-it} V_{\alpha, \beta} ( mn , t) dt.
\end{equation*}
The terms with $hm=kn$ visibly give the first term on the right hand side of \eqref{eq:twisted}.  
By combining \eqref{eq:w3} with \eqref{eq:Vbound}, note that we have uniformly in $x$ that
\begin{equation*}
\frac{\partial^j}{\partial t^j} w(t) V_{\alpha, \beta}(x, t) \ll_{j,A} (1+ |x/T|)^{-A} \Delta^{-j}.
\end{equation*}
Hence for $hm \neq kn$, we have by repeated integration by parts that
\begin{equation*}
\intR w(t) \leg{hm}{kn}^{-it} V_{\alpha, \beta}(mn, t)  dt \ll_{j,A} \frac{(1+ \frac{mn}{T})^{-A}}{\Delta^j |\log{\frac{hm}{kn}}|^j}.
\end{equation*}
Say $hm \geq kn+1$.  Then
\begin{equation*}
\Big|\log\frac{hm}{kn}\Big| \geq \log\Big(1 + \frac{1}{kn}\Big) \geq \frac{1}{2kn} \geq \frac{1}{2 \sqrt{hkmn}}.
\end{equation*}
The same inequality holds in case $kn \geq hm+1$, by symmetry.  The error terms from $hm \neq kn$ are then easily bounded by $O(T^{-A})$ for arbitrarily large $A$.
\end{proof}

\section{Proof of Lemma \ref{lemma:Ialphabeta}}
Inserting the definition of the mollifier $\psi$, we have
\begin{equation*}
I(\alpha, \beta) = \sum_{h, k \leq M} \frac{\mu(h) \mu(k) }{\sqrt{hk}} P\Big(\tfrac{\log{M/h}}{\log{M}} \Big) P\Big(\tfrac{\log{M/k}}{\log{M}} \Big) \intR w(t) \big(\tfrac{h}{k}\big)^{-it}  \zeta(\thalf + \alpha + it) \zeta(\thalf + \beta - it) dt.
\end{equation*}
According to Lemma \ref{lemma:twisted}, write $I(\alpha, \beta) = I_1(\alpha, \beta) + I_2(\alpha, \beta) + O(T^{-A})$.  Explicitly,
\begin{equation}
\label{eq:I1formula}
 I_1(\alpha, \beta) = \sum_{h, k \leq M} \frac{\mu(h) \mu(k) }{\sqrt{hk}} P\Big(\tfrac{\log{M/h}}{\log{M}} \Big) P\Big(\tfrac{\log{M/k}}{\log{M}} \Big) \sum_{hm = k n} \frac{1}{m^{\half + \alpha} n^{\half + \beta}} \intR V_{\alpha, \beta}(mn, t) w(t) dt.
\end{equation}
Notice that $I_2(\alpha, \beta)$ is obtained by replacing $\alpha$ with $-\beta$, $\beta$ with $-\alpha$, and multiplying by $X_{\alpha, \beta, t} = T^{-\alpha - \beta}(1 + O(L^{-1}))$.  That is, $I(\alpha, \beta) = I_1(\alpha, \beta) + T^{-\alpha-\beta} I_1(-\beta, -\alpha) + O(T/L)$.

\begin{mylemma}
\label{lemma:I1approx}
We have $I_1(\alpha, \beta) = c_1(\alpha, \beta) \widehat{w}(0) + O(T/L)$, uniformly on any fixed annuli such that $\alpha, \beta \asymp L^{-1}$, $|\alpha + \beta| \gg L^{-1}$, where
\begin{equation}
\label{eq:c1}
 c_1(\alpha, \beta) = \frac{1}{(\alpha + \beta) \log{M}} \frac{d^2}{dx dy}  M^{\alpha x + \beta y} \int_0^{1}  P(x+u) P(y+u) du \Big|_{x=y=0}.
\end{equation}
\end{mylemma}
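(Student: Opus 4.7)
The plan is to evaluate $I_1(\alpha,\beta)$ by two successive Mellin/contour passes. First, I would substitute the Mellin representation of $V_{\alpha,\beta}(mn,t)$ from Lemma \ref{lemma:AFE} into \eqref{eq:I1formula}. Writing $d = (h,k)$, $h = dh'$, $k = dk'$ with $\gcd(h',k') = 1$, the condition $hm = kn$ forces $m = k'\ell$, $n = h'\ell$ for some integer $\ell \geq 1$, and the $\ell$-sum produces $\zeta(1+\alpha+\beta+2s)$. This yields
\begin{equation*}
I_1(\alpha,\beta) = \frac{1}{2\pi i}\int_{(1)} \frac{G(s)}{s}\, \widehat{g}_w(s)\, \zeta(1+\alpha+\beta+2s)\, \mathcal{B}(s)\, ds,
\end{equation*}
where $\widehat{g}_w(s) = \intR g_{\alpha,\beta}(s,t) w(t)\, dt$ and $\mathcal{B}(s)$ is the sum over $h,k \leq M$ of $\mu(h)\mu(k)P(\cdot)P(\cdot)/[d(h')^{1+\beta+s}(k')^{1+\alpha+s}]$. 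Shifting the contour to $\mathrm{Re}(s) = -\delta$ for a small fixed $\delta > 0$, the pole of $\zeta$ at $s = -(\alpha+\beta)/2$ is cancelled by the zero of $p(s)$ in $G$, so the only pole crossed is the simple pole of $G(s)/s$ at $s = 0$. Since $G(0) = 1$ and $g_{\alpha,\beta}(0,t) = 1$, the residue there equals $\widehat{w}(0)\,\zeta(1+\alpha+\beta)\,\mathcal{B}(0)$; Stirling for $g_{\alpha,\beta}$, convexity for $\zeta$, and a crude bound $\mathcal{B}(s) \ll M^{2|\mathrm{Re}(s)|} L^{O(1)}$ would make the shifted integral $O(T/L)$ uniformly.

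The heart of the argument is then to show $\zeta(1+\alpha+\beta)\,\mathcal{B}(0) = c_1(\alpha,\beta) + O(L^{-1})$, which I would establish by a second Mellin pass. Perron's formula gives
\begin{equation*}
P\Big(\tfrac{\log M/h}{\log M}\Big) = \frac{1}{2\pi i}\int_{(c)} (M/h)^x\, \widehat{P}(x)\, dx, \qquad \widehat{P}(x) = \sum_{j \geq 1}\frac{a_j\, j!}{x^{j+1}(\log M)^j},
\end{equation*}
valid since $a_0 = P(0) = 0$ and $h \leq M$. Applying this to both $P$-factors recasts $\mathcal{B}(0)$ as a double contour integral of the Dirichlet series
\begin{equation*}
Z(x,y;\alpha,\beta) = \sum_{h,k}\frac{\mu(h)\mu(k)}{h^x k^y\, d\, (h')^{1+\beta}(k')^{1+\alpha}} = \prod_p\bigl(1 + p^{-1-x-y} - p^{-1-x-\beta} - p^{-1-y-\alpha}\bigr).
\end{equation*}
Multiplying the local factor by $(1 - p^{-1-x-y})$ yields $(1-p^{-1-x-\beta})(1-p^{-1-y-\alpha}) + p^{-2-x-y}(p^{-\beta}-p^{-y})(p^{-\alpha}-p^{-x})$, which gives the key factorization
\begin{equation*}
Z(x,y;\alpha,\beta) = \frac{\zeta(1+x+y)}{\zeta(1+x+\beta)\,\zeta(1+y+\alpha)}\,A(x,y;\alpha,\beta),
\end{equation*}
where $A$ is an Euler product absolutely convergent in a neighborhood of the origin, with $A(0,0;0,0) = 1$.

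Finally, I would change variables $x = X/\log M$, $y = Y/\log M$, and write $A_0 = \alpha\log M$, $B = \beta\log M$. Using $\zeta(1+s) = s^{-1} + O(1)$, the ratio of $\zeta$'s becomes $(X+B)(Y+A_0)/((X+Y)\log M)$ to leading order, $\zeta(1+\alpha+\beta) = \log M/(A_0+B) + O(1)$, and $A(x,y;\alpha,\beta) = 1 + O(L^{-1})$ on the shifted contour. The main term of $\zeta(1+\alpha+\beta)\mathcal{B}(0)$ reduces to
\begin{equation*}
\frac{1}{A_0+B}\cdot\frac{1}{(2\pi i)^2}\iint e^{X+Y}\frac{\widetilde P(1/X)\widetilde P(1/Y)(X+B)(Y+A_0)}{XY(X+Y)}\,dX\,dY,
\end{equation*}
with $\widetilde P(y) = \sum_{j\geq 1} a_j j! y^j$. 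Writing $1/(X+Y) = \int_0^\infty e^{-v(X+Y)}\,dv$ separates the contours, and the Hankel identity $\frac{1}{2\pi i}\int_{(c)} e^{uX}/X^{k+1}\, dX = u^k/k!$ (for $u > 0$, zero for $u < 0$) collapses each one-variable integral to $P'(1-v) + BP(1-v)$ or $P'(1-v) + A_0 P(1-v)$, supported on $v \in [0,1]$. The remaining $v$-integral evaluates to $\frac{1}{A_0+B}\bigl[\int_0^1 (P')^2\, du + \tfrac12(A_0+B) + A_0 B\int_0^1 P^2\, du\bigr]$, which matches $c_1(\alpha,\beta)$ computed by direct differentiation of $M^{\alpha x + \beta y}\int_0^1 P(x+u)P(y+u)\, du$ at $x = y = 0$ (using $\int_0^1 P P'\, du = \tfrac12$).

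The main obstacle will be the algebraic manipulation required to establish the Euler-product factorization of $Z$ and the careful tracking of uniformity in $\alpha,\beta$ across the annular range, since the final $O(L^{-1})$ conclusion requires every approximation in the two Mellin passes to hold uniformly when $|\alpha+\beta| \gg L^{-1}$.
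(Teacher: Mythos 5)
Your plan follows the same broad architecture as the paper's proof: insert the Mellin representation \eqref{eq:V} for $V_{\alpha,\beta}$, evaluate the arithmetic sum as a ratio of zeta factors times a benign Euler product $A$, shift the $s$-contour past the pole at $s=0$ (noting that $G$ kills the pole of $\zeta(1+\alpha+\beta+2s)$), then open up the mollifier coefficients via Mellin/Perron in two more variables and extract the main term. The coprime decomposition $h=dh'$, $k=dk'$, $m=k'\ell$, $n=h'\ell$ that you use to peel off $\zeta(1+\alpha+\beta+2s)$ is the same computation that underlies \eqref{eq:arithmeticalfactor}, just organized so that the $\ell$-sum is done before the Perron step rather than all at once. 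The one place you genuinely diverge is the endgame: the paper opens $\zeta(1+u+v)$ as a Dirichlet series, computes a double residue at $u=v=0$ for each $n\le M$ using small contours of radius $\asymp L^{-1}$ (where the Taylor expansions of $A$ and $1/\zeta\zeta$ are legitimate), and then passes from the $n$-sum to an integral by Euler--Maclaurin; you instead approximate $\zeta(1+x+y)\approx 1/(x+y)$, separate contours by $\tfrac1{X+Y}=\int_0^\infty e^{-v(X+Y)}\,dv$, and collapse each one-variable integral by the Hankel formula. Both routes land on \eqref{eq:c1alt}, and yours has the aesthetic merit of avoiding the Euler--Maclaurin step and producing the factors $P'(1-v)+A_0P(1-v)$, $P'(1-v)+BP(1-v)$ directly.

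Two points need attention. First, a small algebra slip: multiplying the local factor $1+p^{-1-x-y}-p^{-1-x-\beta}-p^{-1-y-\alpha}$ by $1-p^{-1-x-y}$ yields $(1-p^{-1-x-\beta})(1-p^{-1-y-\alpha})\ \emph{minus}\ p^{-2-x-y}(p^{-\beta}-p^{-y})(p^{-\alpha}-p^{-x})$, not plus; fortunately this does not affect the conclusion that $A$ converges in a neighborhood of the origin with $A(0,0;0,0)=1$. Second, and more substantively, the claim ``$A(x,y;\alpha,\beta)=1+O(L^{-1})$ on the shifted contour'' and the Taylor approximations $\zeta(1+s)=s^{-1}+O(1)$, $1/\zeta(1+x+\beta)=(x+\beta)+O((x+\beta)^2)$ are only valid when $|x|,|y|\ll L^{-1}$, whereas your Perron contours run over all imaginary parts. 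You need to truncate the $X,Y$ contours (using $\widehat{P}(x)\ll |x|^{-2}$ and $e^{X^2}$-type decay, or by an explicit tail estimate) before invoking these expansions, and then track that the truncation error is $O(L^{-1})$. This is precisely the bookkeeping that the paper's residue-per-$n$ calculation with radius-$L^{-1}$ contours makes automatic; your version can certainly be made rigorous, but as written it silently assumes uniformity that is not immediate.

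A final minor remark: after shifting the $s$-contour to $\mathrm{Re}(s)=-\delta$ the shifted integral is in fact $O(T^{1-\varepsilon})$ (as in the paper, using $M\le T^\theta$ with $\theta<1/2$ and Stirling for $g_{\alpha,\beta}$); your stated bound $O(T/L)$ is correct but lossy, and worth tightening since the dominant $O(T/L)$ error in the lemma actually arises elsewhere (from Stirling's approximation for $X_{\alpha,\beta,t}$ and from the arithmetic main term).
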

\noindent {\bf Remark}.  Note that $c_1(\alpha,\beta)$ can be alternatively expressed as
\begin{equation}
\label{eq:c1alt}
c_1(\alpha, \beta) =  \frac{1}{(\alpha + \beta) \log{M}} \int_0^{1} (P'(u) +\alpha \log{M} \; P(u))(P'(u) + \beta \log{M} \; P(u)) du.
\end{equation}
We prove Lemma \ref{lemma:I1approx} in Section \ref{section:I1calc}. 
\begin{proof}[Proof that Lemma \ref{lemma:I1approx} implies Lemma \ref{lemma:Ialphabeta}]
By adding and subtracting the same thing, we have
\begin{equation*}
 I(\alpha, \beta) = [I_1(\alpha, \beta) + I_1(-\beta, -\alpha)] + I_1(-\beta, -\alpha)(T^{-\alpha-\beta}-1) + O(T/L).
\end{equation*}
We treat the two terms above differently.

We first compute the term in brackets using \eqref{eq:c1alt}, getting
\begin{equation*}
 c_1(\alpha, \beta) + c_1(-\beta, -\alpha) =  \int_0^{1} 2 P'(u) P(u) du = 1.
\end{equation*}
As for the second term, we have from \eqref{eq:c1} that
\begin{equation*}
(T^{-\alpha-\beta} -1) c_1(-\beta, -\alpha) =  \frac{1 - T^{-\alpha-\beta}}{(\alpha + \beta) \log{M}}  \frac{d^2}{dx dy}  M^{-\beta x - \alpha y} \int_0^{1} P(x+u) P(y+u) du \Big|_{x=y=0}. 
\end{equation*}
Note that
\begin{equation*}
 \frac{1-T^{-\alpha-\beta}}{(\alpha+\beta)\log{M}} = \frac{1}{\theta} \int_0^{1} T^{-v(\alpha+\beta)} dv.
\end{equation*}

Gathering the formulas gives \eqref{eq:c} although with the additional restriction that $|\alpha + \beta| \gg L^{-1}$.  However, the holomorphy of $I(\alpha, \beta)$ and $c(\alpha, \beta)$ with $\alpha, \beta \ll L^{-1}$ implies that the error term is also holomorphic in this region.  The maximum modulus principle extends the error term to this enlarged domain.
\end{proof}

\section{Proof of Lemma \ref{lemma:I1approx}}
A Mellin formula gives for $1 \leq h \leq M$ and $i=1,2, \dots$
\begin{equation}
\label{eq:PMellin}
\leg{\log{M/h}}{\log{M}}^i = \frac{i!}{(\log{M})^i} \frac{1}{2\pi i} \int_{(1)} \leg{M}{h}^v \frac{dv}{v^{i+1}}.
\end{equation}
Using \eqref{eq:PMellin} and \eqref{eq:V} in \eqref{eq:I1formula}, we have
\begin{multline*}
I_1(\alpha, \beta) = \intR w(t) \sum_{i,j} \frac{a_i a_j i! j!}{(\log{M})^{i+j}} \sum_{hm=kn} \frac{\mu(h) \mu(k)}{h^{\half} k^{\half} m^{\half + \alpha} n^{\half + \beta}} 
\\
\leg{1}{2 \pi i}^3 
\int_{(1)} \int_{(1)} \int_{(1)} 
\frac{M^{u+v}}{h^v k^u} \frac{g_{\alpha,\beta}(s,t)}{(mn)^s}\frac{G(s)}{s} ds \frac{du \; dv}{u^{i+1} v^{j+1}}.
\end{multline*}

\label{section:I1calc}
We compute the sum over $h,k,m,n$ as follows
\begin{equation}
\label{eq:arithmeticalfactor}
\sum_{hm=kn} \frac{\mu(h) \mu(k)}{h^{\half+v} k^{\half+u} m^{\half + \alpha+s} n^{\half + \beta+s}} = \frac{\zeta(1+u+v) \zeta(1+\alpha + \beta + 2s)}{\zeta(1+\alpha + u + s) \zeta(1 + \beta + v + s)} A_{\alpha, \beta}(u,v,s), 
\end{equation}
where the arithmetical factor $A_{\alpha, \beta}(u,v,s)$ is given by an absolutely convergent Euler product in some product of half planes containing the origin.  Next we move the contours to $\text{Re}(u) = \text{Re}(v) = \delta$, and then $\text{Re}(s) = -\delta + \varepsilon$ (for $\delta > 0$ sufficiently small so that the arithmetical factor is absolutely convergent), crossing a pole at $s=0$ only since $G(s)$ vanishes at the pole of $\zeta(1+\alpha + \beta + 2s)$.  Since $M \leq T^{\theta}$ with $\theta < \half$, and $t \geq T/2$, the new contour of integration gives $O(T^{1-\varepsilon})$ for sufficiently small $\varepsilon > 0$, using \eqref{eq:Stirling}.  Thus
\begin{equation}
\label{eq:I1approx}
I_1(\alpha, \beta) =  \widehat{w}(0) \zeta(1 + \alpha + \beta) \sum_{i, j} \frac{a_i a_j i! j!}{(\log{M})^{i+j}} J_{\alpha, \beta}(M)  + O(T^{1-\varepsilon}),
\end{equation}
where
\begin{equation*}
J_{\alpha, \beta}(M) = \leg{1}{2 \pi i}^2 \int_{(\varepsilon)} \int_{(\varepsilon)}M^{u+v} \frac{ \zeta(1+u+v) A_{\alpha, \beta}(u,v,0) }{\zeta(1+\alpha + u) \zeta(1 + \beta + v)}   \frac{du \; dv}{u^{i+1} v^{j+1}}.
\end{equation*}

\begin{mylemma}
\label{lemma:Jasymp}
We have, uniformly for $\alpha, \beta \ll L^{-1}$,
\begin{equation}
\label{eq:Jasymp}
J_{\alpha, \beta}(M) = 
 \frac{(\log{M})^{i+j-1}}{i! j!} \frac{d^2}{dx dy} 
M^{\alpha x + \beta y} \int_{0}^{1} (x+u)^i (y + u)^j du \Big|_{x=y=0}
+ O(L^{i+j-2}).
\end{equation}
\end{mylemma}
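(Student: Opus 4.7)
The plan is to isolate the polar structure of the integrand, extract a main term by contour residues, and bound the residual error.

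First, using the Laurent expansion $\zeta(1+s)=\tfrac{1}{s}+\gamma+O(s)$ together with the normalization $A_{0,0}(0,0,0)=1$ (verified directly from the Euler product in \eqref{eq:arithmeticalfactor}, whose local factor telescopes to $1$ at the identity point), I would factor
\begin{equation*}
\frac{\zeta(1+u+v)\,A_{\alpha,\beta}(u,v,0)}{\zeta(1+\alpha+u)\,\zeta(1+\beta+v)}=\frac{(\alpha+u)(\beta+v)}{u+v}\,G(u,v),
\end{equation*}
where $G$ is holomorphic in a neighborhood of the origin in $(u,v;\alpha,\beta)$ and satisfies $G=1+O(|\alpha|+|\beta|+|u|+|v|)$. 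This isolates the only singular behavior; the factor $G$ is replaced by $1$ in the main term, and the remainder $G-1$ will yield an admissible error.

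For the main-term contribution (with $G$ replaced by $1$), I would use the elementary identity $\tfrac{M^{u+v}}{u+v}=\tfrac{1}{u+v}+\log M\int_0^1 M^{s(u+v)}\,ds$. The $\tfrac{1}{u+v}$ piece contributes zero: since $P(0)=0$ forces $i,j\geq 1$, the integrand $\tfrac{(\alpha+u)(\beta+v)}{(u+v)u^{i+1}v^{j+1}}$ decays like $|u|^{-i}$ as $\mathrm{Re}(u)\to+\infty$, so the $u$-contour can be shifted past all poles to the right. The remaining piece factors into a product of one-dimensional Cauchy integrals, each evaluated by residues:
\begin{equation*}
\frac{1}{2\pi i}\int_{(\varepsilon)}\frac{(\alpha+u)\,M^{su}}{u^{i+1}}\,du=\frac{\alpha(s\log M)^{i}}{i!}+\frac{(s\log M)^{i-1}}{(i-1)!},
\end{equation*}
and analogously for the $v$-integral with $j,\beta$. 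Multiplying the two Cauchy integrals, integrating over $s\in[0,1]$, and comparing polynomial coefficients with the right-hand side of \eqref{eq:Jasymp}—both sides expand into $\alpha\beta(\log M)^{i+j+1}/(i+j+1)$, $(\alpha j+\beta i)(\log M)^{i+j}/(i+j)$, and $ij(\log M)^{i+j-1}/(i+j-1)$, all divided by $i!j!$—yields the claimed main term.

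The main obstacle is the error bound. I would Taylor expand $G(u,v)-1=(G(0,0)-1)+uH_1(u,v)+vH_2(u,v)$ with $H_1,H_2$ holomorphic and uniformly bounded on a fixed neighborhood of the origin. Because $G(0,0;0,0)=1$, the constant term satisfies $G(0,0;\alpha,\beta)-1=O(|\alpha|+|\beta|)=O(L^{-1})$, so its contribution is $L^{-1}$ times an integral of the same shape as the main term, hence $O(L^{i+j-2})$. The pieces $uH_1$ and $vH_2$ each absorb one power from the denominator $u^{i+1}v^{j+1}$; repeating the main-term analysis with $i$ or $j$ decremented by one (and using that $H_1,H_2$ are bounded) yields contributions of size $O(L^{i+j-2})$. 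Throughout, the contours can be kept at $\mathrm{Re}(u)=\mathrm{Re}(v)=c/L$ for a small absolute constant $c$, on which $G$ and its first partial derivatives are uniformly bounded, so every estimate holds uniformly for $\alpha,\beta\ll L^{-1}$.
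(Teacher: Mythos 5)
Your argument is genuinely different from the paper's. Where the paper opens $\zeta(1+u+v)$ into a truncated Dirichlet series, isolates the double residue at $u=v=0$ by pushing into the zero-free region, evaluates it on circles of radius $\asymp L^{-1}$, and finally converts the resulting $n$-sum into an integral via Euler--Maclaurin, you instead keep $\zeta(1+u+v)$ whole, factor the integrand as $\frac{(\alpha+u)(\beta+v)}{u+v}\,G(u,v)$, and use $\frac{M^{u+v}}{u+v}=\frac{1}{u+v}+\log M\int_0^1 M^{s(u+v)}\,ds$ so that the $\frac{1}{u+v}$ piece vanishes under a rightward contour shift (legitimate since $P(0)=0$ forces $i,j\ge 1$) while the surviving piece separates into one-dimensional Cauchy integrals. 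Your coefficient-by-coefficient check of the main term is correct, and this route avoids both the Dirichlet-series expansion and the Euler--Maclaurin step (which the paper itself flags as requiring some care), so it is a genuine streamlining.

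The error estimate, however, has a gap. You claim $G$ and its first partials are uniformly bounded on the vertical lines $\mathrm{Re}(u)=\mathrm{Re}(v)=c/L$; but these lines are unbounded, and $G$ contains $\zeta(1+u+v)$ and $1/\zeta(1+\alpha+u)$, which grow as $|\mathrm{Im}(u)|\to\infty$, so the boundedness holds only on a fixed-size neighborhood of the origin. Correspondingly, the decomposition $G-1=(G(0,0)-1)+uH_1+vH_2$ together with ``repeating the main-term analysis with $i$ decremented'' cannot be taken literally: $H_1,H_2$ couple $u$ and $v$, so neither the rightward-shift argument (which requires control of the integrand for $\mathrm{Re}(u)$ large) nor the separation into one-dimensional residues applies to those pieces. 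A clean repair: observe that the $\frac{1}{u+v}$ piece vanishes under the rightward shift \emph{with the full $G$ in place}, since then the integrand reduces to $\frac{\zeta(1+u+v)A_{\alpha,\beta}(u,v,0)}{\zeta(1+\alpha+u)\zeta(1+\beta+v)u^{i+1}v^{j+1}}$, which tends to zero as $\mathrm{Re}(u)\to+\infty$. Only the $\log M\int_0^1 M^{s(u+v)}\,ds$ piece survives; for that, truncate the vertical contours at $|\mathrm{Im}(u)|,|\mathrm{Im}(v)|\le\delta$ (the tails contribute $O(1)=O(L^{i+j-2})$ since $i+j\ge2$), then deform the truncated contours to circles of radius $\asymp L^{-1}$ about the origin, on which $G=1+O(L^{-1})$; the $G=1$ part reproduces your main term and the $O(L^{-1})$ discrepancy gives $O(L^{-1}\cdot L^{i+j-1})=O(L^{i+j-2})$. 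With that modification the proof is complete.
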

\noindent Lemma \ref{lemma:I1approx} follows directly from Lemma \ref{lemma:Jasymp} by summing over $i$ and $j$, and taking a Taylor expansion of $\zeta(1+\alpha+\beta)$.
\begin{proof}[Proof of Lemma \ref{lemma:Jasymp}]
We begin by using the Dirichlet series for $\zeta(1 + u + v)$ and reversing the order of summation and integration to get
\begin{equation*}
J_{\alpha, \beta}(M) = \sum_{n \leq M} \frac{1}{n} \leg{1}{2 \pi i}^2 \int_{(\varepsilon)} \int_{(\varepsilon)} \leg{M}{n}^{u+v} \frac{ A_{\alpha, \beta}(u,v,0) }{\zeta(1+\alpha + u) \zeta(1 + \beta + v)}   \frac{du \; dv}{u^{i+1} v^{j+1}}.
\end{equation*}
Using the standard zero-free region of $\zeta$ and upper bound on $1/\zeta$ (see \cite{T}, Theorem 3.8 and (3.11.8)), we obtain that $J_{\alpha, \beta}(M)$ equals the residue at $u = v =0$ plus an error of size
\begin{equation*}
\sum_{n \leq M} \frac{1}{n} (1 + \log\tfrac{M}{n})^{-2} \ll 1 \ll L^{i+j-2}.
\end{equation*}
For computing the residue we take contour integrals of radius $\asymp L^{-1}$ and use the Taylor approximation 
\begin{equation*}
\frac{A_{\alpha, \beta}(u,v,0)}{\zeta(1+\alpha+u) \zeta(1+\beta + v)} = (\alpha + u)(\beta + v) A_{0,0}(0,0,0) + O(L^{-3}).
\end{equation*}
We show in Section \ref{section:arithmetical} below that $A_{0,0}(0,0,0) = 1$, a result we now use freely.
Thus
\begin{equation*}
J_{\alpha, \beta}(M) = \sum_{n \leq M} \frac{1}{n} \leg{1}{2\pi i}^2 \left(\oint \leg{M}{n}^u (\alpha + u) \frac{du}{u^{i+1}} \right) \left(\oint \leg{M}{n}^v (\beta + v) \frac{dv}{v^{j+1}} \right) + O(L^{i+j-2}),
\end{equation*}
where the contours are circles of radius $1$ around the origin.

We compute these two integrals exactly.  Suppose $a > 0$.  Then
\begin{equation*}
\frac{1}{2 \pi i} \oint a^{u} (\alpha + u) \frac{du}{u^{l+1}} = \frac{d}{dx} \Big[e^{\alpha x} \frac{1}{2 \pi i} \oint (a e^x)^u \frac{du}{u^{l+1}} \Big]_{x=0} = \frac{1}{l!} \frac{d}{dx} e^{\alpha x} (x + \log{a})^l \Big|_{x=0}.
\end{equation*}
Thus
\begin{equation*}
 J_{\alpha, \beta}(M) = \frac{1}{i! j!} \frac{d^2}{dx dy} 
e^{\alpha x + \beta y} \sum_{n \leq M} \frac{1}{n} (x + \log(M/n))^i (y + \log(M/n))^j 
\Big|_{x=y=0}
+ O(L^{i+j-2}).
\end{equation*}
Note that
\begin{equation*}
 \frac{d}{dx}  e^{\alpha x} (x + \log(M/n))^i \Big|_{x=0} = 
\frac{(\log{M})^i}{\log{M}} \frac{d}{dx}M^{\alpha x} \Big(x + \frac{\log(M/n)}{\log{M}} \Big)^i \Big|_{x=0},
\end{equation*}
so that by summing over $i$ and $j$ we have
\begin{multline*}
 J_{\alpha, \beta}(M) = \frac{(\log{M})^{i+j-2}}{i! j!} \frac{d^2}{dx dy} 
M^{\alpha x + \beta y} \sum_{n \leq M} \frac{1}{n} \Big(x + \frac{\log(M/n)}{\log{M}} \Big)^i \Big(y + \frac{\log(M/n)}{\log{M}} \Big)^j \Big|_{x=y=0}
\\
+ O(L^{i+j-2}).
\end{multline*}
By the Euler-Maclaurin formula, we can replace the sum over $n$ by a corresponding integral without introducing a new error term (this requires some thought).  That is,
\begin{multline*}
 J_{\alpha, \beta}(M) = \frac{(\log{M})^{i+j-2}}{i! j!} \frac{d^2}{dx dy} 
M^{\alpha x + \beta y} \int_{1}^{M} r^{-1} \Big(x + \frac{\log(M/r)}{\log{M}} \Big)^i \Big(y + \frac{\log(M/r)}{\log{M}} \Big)^j \Big|_{x=y=0}
\\
+ O(L^{i+j-2}).
\end{multline*}
Changing variables $r = M^{1-u}$ and simplifying finishes the proof.
\end{proof}

\section{The arithmetical factor}
\label{section:arithmetical}
Here we verify that $A_{0,0}(0,0,0) = 1$ as claimed in the proof of Lemma \ref{lemma:Jasymp}.  The proof is surprisingly easy.  We show that $A_{0,0}(s,s,s) =1$ for all $\text{Re}(s) > 0$.  From \eqref{eq:arithmeticalfactor} we have
\begin{equation*}
 A_{0,0}(s,s,s) = \sum_{hm=kn} \frac{\mu(h) \mu(k)}{(hkmn)^{\half + s}},
\end{equation*}
noting that the ratios of zeta's on the right hand side of \eqref{eq:arithmeticalfactor} cancel.  The result now follows instantly from the M\"{o}bius formula.


\begin{thebibliography}{99}
\bibitem[C1]{Conrey25} J. B. Conrey, {\it 
More than two fifths of the zeros of the Riemann zeta function are on the critical line.}
J. Reine Angew. Math. 399 (1989), 1--26. 
\bibitem[C2]{ConreyJNT} J. B. Conrey, {\it Zeros of derivatives of Riemann's $\xi$-function on the critical line.}  J. Number Theory  16  (1983), no. 1, 49--74.
\bibitem[CG]{CG} J.B. Conrey and A.Ghosh, {\it A simpler proof of Levinson's theorem.}  Math. Proc. Cambridge Philos. Soc.  97  (1985),  no. 3, 385--395.
Proc. Lond. Math. Soc. (3) 94 (2007), no. 3, 594--646. 
\bibitem[I]{Ivic} A. Ivi{\'c}, {\it The Riemann zeta-function.}
The theory of the Riemann zeta-function with applications. A Wiley-Interscience Publication. John Wiley \& Sons, Inc., New York, 1985. 
\bibitem[IK]{IK} H. Iwaniec and E. Kowalski, {\it Analytic Number Theory}. American Mathematical Society Colloquium Publications, 53.  American Mathematical Society, Providence, RI, 2004.
\bibitem[KV]{KV} A. Karatsuba and S. Voronin, {\it The Riemann zeta-function} Translated from the Russian by Neal Koblitz. de Gruyter Expositions in Mathematics, 5. Walter de Gruyter \& Co., Berlin, 1992.
\bibitem[Le]{Levinson} N. Levinson, {\it More than one third of the zeros of Riemann's zeta function are on $\sigma = 1/2$.} Adv. Math. 13 (1974), 383--436.
\bibitem[S]{Selberg} A. Selberg, {\it On the zeros of Riemann's zeta-function.} Skr. Norske Vid. Akad. Oslo I. (1942). no. 10, 1--59.
\bibitem[T]{T} E. C. Titchmarsh, \emph{The Theory of the Riemann Zeta-function}.
Second edition. Edited and with a preface by D. R. Heath-Brown. The Clarendon Press, Oxford University Press, New York, 1986.
\end{thebibliography}
\end{document}